\title[Multivariate Hermite Method]{The multivariate Hermite method for counting real and complex solutions to polynomial systems}
\author{Volodymyr Oleksiyuk}
\lstdefinelanguage{Macaulay2}{
  morekeywords={
    if, then, else, for, do, while, return, new, true, false,
    ideal, matrix, ring, ZZ, QQ, R, S, apply, vars, entries, flatten,
    table, print, sort, concatenate, value, degree, max, coefficients, trace, eigenvalues, sum, terms, leadCoefficient, length, mutableMatrix, substitute, timing, isNumber, not, error, toString, toList, dim, monomials, set, rank, signature
  },
  sensitive=true,
  morecomment=[l]--,
  morestring=[b]",
}
\tiny\color{gray},
\begin{document}

\newtheorem{lemma}{Lemma}
\theoremstyle{definition}
\newtheorem*{theorem}{Theorem}
\begin{abstract}

This note presents the multivariate Hermite criterion: a practical and powerful algorithm for determining the number of distinct real and complex roots of a zero-dimensional system of polynomials in any finite number of variables. The final section includes an implementation in Macaulay2, a free and open-source computer algebra system.

\end{abstract}

\maketitle

\vspace{2em}

\begin{center}
    \textit{Written under the guidance of Prof. Dr. Daniel Plaumann}
\end{center}

\vspace{2em}

\section{Introduction}

Linear (degree 1) and quadratic (degree 2) polynomials are solvable in closed form. Explicit solutions also exist for cubic and quartic equations via the Cardano formula and the Ferrari method, respectively. The Abel–Ruffini theorem shows that no general radical solution exists for polynomial equations of degree five or higher. Numerical methods such as Newton’s method can approximate roots but do not determine their exact number. While a univariate polynomial of degree $n$ has $n$ complex roots counted with multiplicity, identifying the number of distinct roots, and in particular, the number of real roots, requires additional tools. The Hermite method provides these counts.

The result addressed here is classical; the objective of this research paper is to present a proof of the multivariate Hermite criterion that closely parallels the univariate case, as well as an implementation for this method. 

\vspace{1em}

\section{The Hermite Criterion and Its Generalization}

\vspace{1em}

\subsection{Classic Hermite Method}~

The classical Hermite method for a single polynomial proceeds as follows:

\begin{theorem}[Classic Hermite Criterion]
Let $f = t^n + a_{n-1} t^{n-1} + \ldots + a_1 t + a_0 \in \mathbb{R}[t]$. Let $\alpha_1, \ldots, \alpha_n \in \mathbb{C}$ be the (not necessarily distinct) roots of $f$. Define the Newton sums as $p_k = \alpha_1^k + \alpha_2^k + \ldots + \alpha_n^k$. Then, for the Hermite matrix $H(f) = (p_{i+j-1})_{i,j = 1,\ldots,n}$:
\begin{enumerate}
    \item The rank of $H(f)$ equals the number of distinct complex roots of $f$.
    \item The signature of $H(f)$ equals the number of distinct real roots.
\end{enumerate}
\end{theorem}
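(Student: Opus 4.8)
The plan is to reduce both statements to a single structural fact: the power-sum Hankel matrix factors through a Vandermonde matrix built from the \emph{distinct} roots. First I would record that $H(f)$ is a real symmetric matrix. Since $f\in\mathbb{R}[t]$, Newton's identities express each $p_k$ as a polynomial with real coefficients in $a_0,\dots,a_{n-1}$, so every entry is real and the matrix is genuinely symmetric; hence both its rank and its signature are well defined.

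Next I would pass from roots-with-multiplicity to distinct roots. Let $\beta_1,\dots,\beta_r$ be the distinct roots with multiplicities $m_1,\dots,m_r$, so that $p_k=\sum_{\ell=1}^r m_\ell\,\beta_\ell^{\,k}$. Substituting this into the Hankel entries yields the factorization
\[
H(f) = V\,D\,V^{\top},\qquad V=(\beta_\ell^{\,i-1})_{\substack{1\le i\le n\\ 1\le \ell\le r}},\quad D=\operatorname{diag}(m_1,\dots,m_r),
\]
a rectangular $n\times r$ Vandermonde matrix flanking a diagonal weight matrix. Because the $\beta_\ell$ are pairwise distinct and $r\le n$, the matrix $V$ has full column rank $r$, and $D$ is invertible (the $m_\ell$ are positive integers). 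Part (1) then follows immediately: $\operatorname{rank} H(f)=\operatorname{rank}(VDV^{\top})=\operatorname{rank} V=r$, the number of distinct complex roots.

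For part (2) I would exploit the real structure together with Sylvester's law of inertia, which guarantees that signature is a congruence invariant. I would split the distinct roots into the $s$ real ones $\lambda_1,\dots,\lambda_s$ and the $t$ conjugate pairs $\mu_k,\overline{\mu_k}$, so $r=s+2t$. Grouping the rank-one summands $m_\ell\,v_\ell v_\ell^{\top}$ (with $v_\ell=(1,\beta_\ell,\dots,\beta_\ell^{\,n-1})^{\top}$) according to this partition writes $H(f)$ as a sum of real symmetric pieces. Each real root contributes $m_\ell\,v_\ell v_\ell^{\top}$, a positive-semidefinite rank-one term of signature $+1$. Each conjugate pair contributes $m_k\bigl(v v^{\top}+\overline{v}\,\overline{v}^{\top}\bigr)=2m_k\bigl(xx^{\top}-yy^{\top}\bigr)$, where $v=x+iy$ with $x,y$ real; this is a rank-two term of signature $0$. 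Summing, the signature equals $s$, the number of distinct real roots.

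The main obstacle is making the grouping rigorous at the level of inertia rather than merely rank. Rank is additive on column spaces, but signature is not automatically additive, so I would recast the factorization as a single real congruence $H(f)=W\,\Sigma\,W^{\top}$ with $W$ real and $\Sigma$ block-diagonal: $1\times1$ positive blocks for real roots and $2\times2$ indefinite blocks $\operatorname{diag}(+,-)$ for conjugate pairs, obtained by replacing each conjugate pair of complex Vandermonde columns by the real pair $x,y$. The delicate point is confirming that $W$ has full column rank---equivalently that the vectors $\{v_{\lambda_j}\}\cup\{x_k,y_k\}$ are linearly independent, which I would deduce from the invertible reparametrization $x=(v_{\mu}+v_{\overline{\mu}})/2$, $y=(v_{\mu}-v_{\overline{\mu}})/(2i)$ applied to the independent Vandermonde columns---so that no block degenerates and Sylvester's law lets me read the signature directly off $\Sigma$.
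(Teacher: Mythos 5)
Your proof is correct, but it is not the route the paper takes: the paper in fact gives no proof of the univariate criterion at all (it defers to Basu--Pollack--Roy, Theorem~4.13) and instead proves the multivariate generalization by a different method, so your argument is a genuine addition rather than a paraphrase. You diagonalize the Hankel matrix by the congruence $H(f)=VDV^{\top}$, with $V$ the rectangular Vandermonde matrix of the distinct roots and $D$ the diagonal matrix of multiplicities, and then make the congruence real by trading each conjugate pair of columns $v_\mu,v_{\overline{\mu}}$ for $x=\operatorname{Re}v_\mu$, $y=\operatorname{Im}v_\mu$; rank and inertia are then read off via Sylvester's law, and you correctly isolate the one delicate point (full column rank of the real matrix $W$, which follows from the invertible change of columns applied to independent Vandermonde columns). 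The paper's multivariate proof instead works intrinsically in $A=\mathbb{R}[X_1,\dots,X_n]/I$: it builds real polynomial interpolation functions $\varepsilon_l$ on the variety (one per real root, a real/imaginary pair per conjugate pair), shows that the Gram matrix of the trace form in this basis is $\operatorname{diag}(1,\dots,1,2,\dots,2,-2,\dots,-2)$, and disposes of multiplicities by a separate argument: nilpotents lie in the radical of the form, since $\operatorname{tr}(\varphi_{fg})=0$ whenever $g$ is nilpotent, plus a lemma reducing $A$ to $A/\sqrt{0}$. The two mechanisms for handling multiplicity are the real difference: your weights $m_\ell$ sit inside $D$ and are harmless because they are positive, whereas the paper quotients them away via the nilradical; your columns $x,y$ are precisely the analogues of the paper's basis elements $\varepsilon_{r+l},\varepsilon_{r+s+l}$. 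What each buys: your argument is elementary, self-contained, and explicit at the matrix level, but it is tied to the univariate Hankel structure; the paper's argument needs more commutative algebra (Nullstellensatz, Noetherianity) but transfers verbatim to zero-dimensional systems in several variables, where the analogue of your factorization would require knowing that the trace of a multiplication operator is the multiplicity-weighted sum of point evaluations.

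One small repair: your factorization $VDV^{\top}$ has $(i,j)$ entry $\sum_\ell m_\ell\beta_\ell^{\,i+j-2}=p_{i+j-2}$, so you have proved the theorem for the standard Hermite matrix $(p_{i+j-2})_{i,j}$, not for $(p_{i+j-1})_{i,j}$ as literally printed in the statement. That is the right thing to do---the printed index is evidently a typo, since for $f=t$ the matrix $(p_{i+j-1})=(0)$ has rank $0$ while $f$ has one distinct root; with the shifted entries $p_{i+j-1}$ the correct conclusions would be that the rank counts distinct \emph{nonzero} roots and the signature counts positive minus negative real roots---but you should state explicitly that you are correcting the index rather than leave the mismatch silent.
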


At first glance, this theorem might appear redundant, since the Newton sums $p_k$ are defined in terms of the roots of $f$. If the roots are already known, why compute the matrix? The key insight is that the Newton identities express the $p_k$ directly in terms of the coefficients of $f$:
\[
p_r + a_{n-1} p_{r-1} + a_{n-2} p_{r-2} + \ldots + a_{n-r+1} p_1 + a_{n-r} r = 0, \quad \forall r \ge 1
\]

This allows recursive computation of the $p_k$:
\[
p_0 = n,\quad p_1 = -a_{n-1},\quad p_2 = -p_1 a_{n-1} - 2a_{n-2} = a_{n-1}^2 - 2a_{n-2},\quad \text{etc.}
\]

A proof of this classical result can be found in \cite[Theorem~4.13, p.~99]{basu_pollack_roy2003book}. Here, we instead focus on the generalized Hermite criterion for systems of polynomials in an arbitrary number of variables.

\vspace{1em}

\subsection{Multivariate Hermite criterion}~

\begin{theorem}[Multivariate Hermite criterion, \cite{plaumann2023seminar}]
Let $f_1, \ldots, f_k \in \mathbb{R}[X_1, \ldots, X_n]$ and assume that the system $f_1 = \cdots = f_k = 0$ is zero-dimensional, i.e.~ with finitely many complex solutions. Let $I = (f_1, \ldots, f_k)$ be the ideal generated by $f_1, \ldots, f_k$ and $A = \mathbb{R}[X_1, \ldots, X_n]/I$ the factor ring modulo $I$. On the finite-dimensional $\mathbb{R}$-vector space $A$, let
\[
\varphi_f : A \to A, \, h \mapsto fh
\]
be the multiplication with a fixed element $f \in A$, and consider the bilinear map
\[
H : A \times A \to \mathbb{R}, \, (f, g) \mapsto \operatorname{tr}(\varphi_{fg}).
\]
\begin{enumerate}
    \item The rank of $H$ is the number of distinct complex solutions.
    \item The signature of $H$ is the number of distinct real solutions.
\end{enumerate}
\end{theorem}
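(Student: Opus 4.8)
The plan is to reduce both statements to an explicit evaluation formula for the trace, after which everything follows from linear algebra that mirrors the univariate Vandermonde factorization $H(f)=V^{\top}V$. First I would complexify: set $A_{\mathbb C}=A\otimes_{\mathbb R}\mathbb C=\mathbb C[X_1,\dots,X_n]/I\mathbb C[X_1,\dots,X_n]$, which is a finite-dimensional $\mathbb C$-algebra because the system is zero-dimensional. Being Artinian, $A_{\mathbb C}$ decomposes as a finite product of local algebras $A_{\mathbb C}\cong\prod_{p\in V}\mathcal O_p$ indexed by the distinct complex solutions $V=\{p_1,\dots,p_s\}$, where each maximal ideal corresponds to a point $p$ and $m_p:=\dim_{\mathbb C}\mathcal O_p$ is its multiplicity, with $\sum_p m_p=\dim_{\mathbb C}A_{\mathbb C}=\dim_{\mathbb R}A$. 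Trace and rank are read off from the same matrix before and after this base change, while the signature of the real symmetric form $H$ is a real invariant I will later extract from the real structure on $V$.

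The key step, and the one I expect to be the main obstacle, is the eigenvalue formula
\[
\operatorname{tr}(\varphi_f)=\sum_{p\in V}m_p\,f(p)\qquad\text{for all }f\in A_{\mathbb C}.
\]
To prove it I would argue factor by factor: on $\mathcal O_p$ the element $f-f(p)$ lies in the maximal ideal, hence is nilpotent, so $\varphi_{f-f(p)}$ is a nilpotent operator and $\varphi_f|_{\mathcal O_p}=f(p)\,\mathrm{id}+(\text{nilpotent})$. Thus $\varphi_f|_{\mathcal O_p}$ has the single eigenvalue $f(p)$ with algebraic multiplicity $m_p$, and summing traces over all factors gives the formula. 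Applying it to $fg$ yields the clean bilinear expression $H(f,g)=\sum_{p}m_p\,f(p)g(p)$, the multivariate analogue of $p_{i+j-2}=\sum_\ell\alpha_\ell^{\,i-1}\alpha_\ell^{\,j-1}$ from the classical case.

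For statement (1), since rank is preserved under complexification it suffices to compute the radical of the form $H_{\mathbb C}$ on $A_{\mathbb C}$, which I would do directly rather than manipulating a possibly isotropic Gram matrix. If $f$ is nilpotent then $fg$ is nilpotent for every $g$, so $\varphi_{fg}$ is nilpotent and $H_{\mathbb C}(f,g)=0$; hence the nilradical $\mathfrak N$ lies in the radical. Conversely, since the points are distinct the evaluation map $A_{\mathbb C}\to\mathbb C^{s}$, $h\mapsto(h(p_i))_i$, is surjective, so $H_{\mathbb C}(f,g)=\sum_i m_{p_i}f(p_i)g(p_i)=0$ for all $g$ forces $f(p_i)=0$ for every $i$ (as $m_{p_i}\neq0$), i.e.\ $f\in\mathfrak N$ by the Nullstellensatz. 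Therefore the radical equals $\mathfrak N$ and $\operatorname{rank}(H)=\dim_{\mathbb C}A_{\mathbb C}-\dim_{\mathbb C}\mathfrak N=\dim_{\mathbb C}(A_{\mathbb C}/\mathfrak N)=s$, the number of distinct complex solutions.

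For statement (2) I would exploit that $I$ is generated by real polynomials, so $V$ is stable under complex conjugation $\sigma$ and $m_p=m_{\sigma(p)}$. Since the form vanishes on its radical, its signature equals that of the induced nondegenerate form on the real reduced algebra $A_{\mathrm{red}}=A/\mathfrak N\cong\mathbb R^{\,r}\times\mathbb C^{\,c}$, where $r$ is the number of real points among the $p_i$ and $c$ the number of conjugate pairs (so $r+2c=s$). On each real factor the form is $f\mapsto m_p f(p)^2$, positive definite and contributing $+1$. On each $\mathbb C$-factor attached to a pair $\{p,\sigma(p)\}$, writing $f(p)=x+iy$ and $f(\sigma(p))=\overline{f(p)}$, the form becomes $m_p\bigl(f(p)^2+f(\sigma(p))^2\bigr)=2m_p(x^2-y^2)$, a rank-$2$ form of signature $0$. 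Summing over all factors gives signature $=r$, the number of distinct real solutions. This conjugate-pair cancellation is precisely the multivariate counterpart of complex-conjugate roots cancelling in the signature of the classical Hermite matrix.
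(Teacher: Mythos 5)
Your proof is correct, but it takes a genuinely different route from the paper's. You complexify, invoke the structure theorem for Artinian algebras to write $A_{\mathbb{C}} \cong \prod_{p} \mathcal{O}_p$, and derive the Stickelberger-type trace formula $\operatorname{tr}(\varphi_f)=\sum_p m_p f(p)$, from which both counts follow: the radical of the form is identified with the nilradical (giving the rank), and the conjugate-pair cancellation $f(p)^2+\overline{f(p)}^2=2(x^2-y^2)$ gives the signature. The paper never complexifies and never introduces local rings or multiplicities: it first proves the theorem in the reduced case, where $A$ is literally the ring of functions $\mathfrak{V}(I)\to\mathbb{C}$ induced by real polynomials, by constructing an explicit basis $\varepsilon_1,\dots,\varepsilon_{r+2s}$ of indicator-type functions (with separate real and imaginary parts at conjugate pairs) in which the Gram matrix is $\operatorname{diag}(1,\dots,1,2,\dots,2,-2,\dots,-2)$; it then reduces the general case to this one by extending a basis of $\sqrt{0}$ to a basis of $A$ and observing that all Gram entries involving $\sqrt{0}$ vanish (your same observation that nilpotents have trace-zero multiplication operators, deployed for the same purpose), together with a lemma showing $\dim(A/\sqrt{0})<\infty \Rightarrow \dim(A)<\infty$. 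What your approach buys is conceptual economy: the trace formula handles multiplicities and nilpotents in one stroke, identifies the radical of $H$ intrinsically rather than through a choice of basis, and is the standard argument in the literature. What the paper's approach buys is elementarity and its stated goal of paralleling the univariate proof: it needs only the Nullstellensatz, Noetherianity, and explicit linear algebra, with no tensor products, local rings, or structure theory. One step you should not gloss over: you assert that $A_{\mathbb{C}}$ is finite-dimensional \emph{because} the system is zero-dimensional, but the hypothesis is only that there are finitely many common zeros, and the implication from finitely many zeros to finite vector-space dimension is precisely what the paper's lemma establishes; since the theorem statement itself declares $A$ finite-dimensional you may take it as given here, but in a self-contained write-up this implication requires an argument.
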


\vspace{1em}

\subsection{Proof under an assumption}~

We first prove the theorem under the assumption that $A$ is the ring of all functions $g : \mathfrak{V}(I) \to \mathbb{C}$, that are induced by a real polynomial
\[
A = \{g : \mathfrak{V}(I) \to \mathbb{C} \mid g \in \mathbb{R}[X_1, \ldots, X_n] \},
\]
where $\mathfrak{V}(I) = \{x_1, \ldots, x_r, z_1, \ldots, z_s, \overline{z_1}, \ldots, \overline{z_s} \}$ is the finite set of distinct complex solutions to the system $f_1 = \cdots = f_k = 0$. Here, $\{x_1, \ldots, x_r\}$ are the real roots, while the nonreal ones are grouped by conjugate pairs $\{z_1, \ldots, z_s\}$ and $\{\overline{z_1}, \ldots, \overline{z_s}\}$. 

We construct a basis of $A$ using the following functions:
\[
\varepsilon_l \in \mathbb{R}[X_1, \ldots, X_n], \quad l \in \{1, \ldots, r+2s\},
\]
defined on $\mathfrak{V}(I)$ by:
\begin{align*}
    \varepsilon_l(x_j) &= \delta_{lj},\quad \varepsilon_l(z_j) = 0, & l \in \{1, \ldots, r\}, \\
    \varepsilon_l(x_j) &= 0,\quad \varepsilon_l(z_j) = \delta_{l j}, & l \in \{r+1, \ldots, r+s\}, \\
    \varepsilon_l(x_j) &= 0,\quad \varepsilon_l(z_j) = \delta_{l j} i, & l \in \{r+s+1, \ldots, r+2s\}.
\end{align*}
Since these $\varepsilon_l$ are real polynomials, their values at the conjugates satisfy:
\begin{align*}
    \varepsilon_l(\overline{z_j}) &= 0, & l \in \{1, \ldots, r\}, \\
    \varepsilon_l(\overline{z_j}) &= \delta_{l j}, & l \in \{r+1, \ldots, r+s\}, \\
    \varepsilon_l(\overline{z_j}) &= -\delta_{l j} i, & l \in \{r+s+1, \ldots, r+2s\}.
\end{align*}

These functions form a basis of $A$. Each polynomial $\varepsilon_l$ lies in $A$, and the functions are linearly independent: at each point in $\mathfrak{V}(I)$, at most one function evaluates nonzero, except in the complex case where $\varepsilon_{r+l}(z_j)$ and $\varepsilon_{r+s+l}(z_j)$ are both nonzero. However, the system

\[
\begin{cases}
c_1 \varepsilon_{r+l}(z_j) + c_2 \varepsilon_{r+s+l}(z_j) = 0 \\
c_1 \varepsilon_{r+l}(\overline{z_j}) + c_2 \varepsilon_{r+s+l}(\overline{z_j}) = 0
\end{cases}
\quad \Longleftrightarrow \quad
\begin{cases}
c_1 + c_2 i = 0 \\
c_1 - c_2 i = 0
\end{cases}
\]
has only the trivial solution, confirming linear independence.

Every element of $A$ is a linear combination of the $\varepsilon_l$, since at each real root it is determined uniquely by one of $\varepsilon_1, \ldots, \varepsilon_r$, and at each complex root (along with its conjugate), it is determined by a linear combination of $\varepsilon_{r+l}$ and $\varepsilon_{r+s+l}$, with coefficients given by the real and imaginary parts of the target value.

Now, we analyze the bilinear map $H$ by examining $\varphi_{\varepsilon_l \varepsilon_p}$ for $l, p \in \{1, \ldots, r+2s\}$. The following cases arise:
\begin{enumerate}
    \item If $l = p \in \{1, \ldots, r\}$, then $\varphi_{\varepsilon_l \varepsilon_l}(\varepsilon_l) = \varepsilon_l$, and zero elsewhere. Hence, $\operatorname{Tr}(\varphi_{\varepsilon_l \varepsilon_l}) = 1$.
    
    \item If $l = p \in \{r+1, \ldots, r+s\}$, then $\varphi_{\varepsilon_l \varepsilon_l}(\varepsilon_l) = \varepsilon_l$, $\varphi_{\varepsilon_l \varepsilon_l}(\varepsilon_{l+s}) = \varepsilon_{l+s}$, and zero elsewhere, so $\operatorname{Tr}(\varphi_{\varepsilon_l \varepsilon_l}) = 2$.
    
    \item If $l = p \in \{r+s+1, \ldots, r+2s\}$, then $\varphi_{\varepsilon_l \varepsilon_l}(\varepsilon_l) = -\varepsilon_l$, $\varphi_{\varepsilon_l \varepsilon_l}(\varepsilon_{l-s}) = -\varepsilon_{l-s}$, due to $i^2 = -1$, and zero elsewhere. Thus, $\operatorname{Tr}(\varphi_{\varepsilon_l \varepsilon_l}) = -2$.
    
    \item If $l = p + s \in \{r+s+1, \ldots, r+2s\}$, then $\varphi_{\varepsilon_l \varepsilon_{l-s}}$ acts as follows:
    \[
    \varphi_{\varepsilon_l \varepsilon_{l-s}}(\varepsilon_l) = \varepsilon_l \varepsilon_{l-s} \varepsilon_l = \varepsilon_l^2 \varepsilon_{l-s} = \varepsilon_{l-s}, 
    \]
    \[
    \varphi_{\varepsilon_l \varepsilon_{l-s}}(\varepsilon_{l-s}) = \varepsilon_l \varepsilon_{l-s}^2 = -\varepsilon_l,
    \]
    but neither vector maps to itself, and it is zero elsewhere, so $\operatorname{Tr}(\varphi_{\varepsilon_l \varepsilon_{l-s}}) = 0$.
    
    \item The case $l + s = p \in \{r+s+1, \ldots, r+2s\}$ is symmetric to case 4 and yields trace zero.
    
    \item In all other cases, $\varphi_{\varepsilon_l \varepsilon_p} \equiv 0$, hence $\operatorname{Tr}(\varphi_{\varepsilon_l \varepsilon_p}) = 0$.
\end{enumerate}

The Gram matrix corresponding to $H$ in the basis $\{\varepsilon_l\}$ is diagonal:
\[
H = \operatorname{diag}(\underbrace{1, \dots, 1}_{r\text{ times}}, 
                         \underbrace{2, \dots, 2}_{s\text{ times}}, 
                         \underbrace{-2, \dots, -2}_{s\text{ times}}).
\]

Here, each entry corresponds to a root: 1 for each real root, 2 for each complex root, and -2 for each conjugate. Thus the rank is $r+2s$, or the total number of distinct complex roots, and the signature is $r+2s-2s=r$, or the number of distinct real roots.

\vspace{1em}

\subsection{Alternative proof under the assumption}~

Alternatively, one can make similar arguments under the assumption 
\[
A = \{g : \mathfrak{V}(I) \to \mathbb{C} \, | \, g \in \mathbb{R}[X_1, \ldots, X_n] \} 
\]
using a decomposition of A. As before, we build the basis of $\varepsilon_l, l\in \{ 1, \ldots r+2s\}$. Then, as these are all simply polynomials, we can conclude that 
\[
A \cong A_1 \times \cdots \times A_{r+s}
\]
where 
\begin{align*}
    A_j &= \{ g:\{x_j \} \to \mathbb{C} \, | \, g \in  \mathbb{R}[X_1, \ldots, X_n] \}, \quad j \in \{ 1, \ldots r\} \\
    A_{r+j} &= \{ g:\{z_j, \overline{z_j} \} \to \mathbb{C} \, | \, g \in  \mathbb{R}[X_1, \ldots, X_n] \}, \quad j \in \{ 1, \ldots s\}
\end{align*}
This isomorphism is true, as in the points $\{ x_1, \ldots x_r, z_1, \ldots z_s \}$ the values of the polynomials are independent and uniquely determined in the points $x_i$ and in each pair of points  $(z_j, \overline{z_j})$. As they are independent, we can conclude that 
\begin{align*}
    \operatorname{Rank}(H) &= \sum_{j=1}^{r+s} \operatorname{Rank}(H_j) \\
    \operatorname{Tr}(H) &= \sum_{j=1}^{r+s} \operatorname{Tr}(H_j)
\end{align*}
where $H_j$ are Hermite matrices built over $A_j$. For $j \in \{ 1, \ldots r \}$ these matrices are just a single number, namely, a 1, hence
\[
\operatorname{Rank}(H_j) = \operatorname{Tr}(H_j) = 1, \, j \in \{ 1, \ldots r \}
\]
For $j \in \{ r+1, \ldots r+s \}$ we observe that with our basis for any $l \in \{ r+1, \ldots r+s\}$
\begin{align*}
    \varepsilon_{l}\varepsilon_{l}\varepsilon_{l} &= \varepsilon_{l} \\
    \varepsilon_{l}\varepsilon_{l}\varepsilon_{l+s} &= \varepsilon_{l+s} \\
    \varepsilon_{l}\varepsilon_{l+s}\varepsilon_{l+s} &= -\varepsilon_{l} \\
    \varepsilon_{l+s}\varepsilon_{l+s}\varepsilon_{l+s} &= -\varepsilon_{l+s}
\end{align*}
which gives us the following matrices
\[
\begin{pmatrix}
1 & 0 \\
0 & 1
\end{pmatrix}
,
\begin{pmatrix}
-1 & 0 \\
0 & -1
\end{pmatrix}
,
\begin{pmatrix}
0 & -1 \\
-1 & 0
\end{pmatrix}
\]
for the maps $\varphi_{\varepsilon_{l} \varepsilon_{l}}$, $\varphi_{\varepsilon_{l+s} \varepsilon_{l+s}}$, $\varphi_{\varepsilon_{l+s} \varepsilon_{l}}$. These matrices commute. The according traces are $2$, $-2$ and $0$, therefore, we conclude that the matrix $H_j$ corresponding to these roots is 
\[
\begin{pmatrix}
    2 & 0 \\
    0 & -2
\end{pmatrix}
\]
These matrices have the $\operatorname{Rank}(H_j)=2$ and $\operatorname{Tr}(H_j)=0$, hence
\begin{align*}
    \operatorname{Rank}(H) &= \sum_{j=1}^{r+s} \operatorname{Rank}(H_j) = \sum_{j=1}^{r} \operatorname{Rank}(H_j) + \sum_{j=r+1}^{r+s} \operatorname{Rank}(H_j) = r+2s \\
    \operatorname{Tr}(H) &= \sum_{j=1}^{r+s} \operatorname{Tr}(H_j) = \sum_{j=1}^{r} \operatorname{Tr}(H_j) = r
\end{align*}

\vspace{1em}

\subsection{Reduction of the general case to the assumption}~

Now let us reduce the general case of the theorem to the special case. Recall that $I \triangleleft \mathbb{R}[X_1, \ldots, X_n]$ and $A = \mathbb{R}[X_1, \ldots, X_n]/I$. We want to study the rank and signature of 
\[
H : A \times A \to \mathbb{R}, \quad (f,g) \mapsto \operatorname{tr}(\varphi_{fg}),
\]
where $\varphi_f : A \to A, \, h \mapsto fh$. The best way to proceed is by considering a linear basis of $A$ and proving that the corresponding Gram matrix of $H$ has the same rank and signature as the Gram matrix of $H$ under our assumption 
\[
A = \{g : \mathfrak{V}(I) \to \mathbb{C} \, | \, g \in \mathbb{R}[X_1, \ldots, X_n]\}.
\]
For this, we also need to prove that $\dim(A) < \infty$, so that the notion of a basis is well-defined.

To that end, it is useful to observe $A/\sqrt{0}$. Let us use the following classic theorem:

\begin{theorem}[Hilbert's Nullstellensatz]
If $k$ is a field and $K/k$ is an algebraically closed field extension, for $I \triangleleft k[X_1, \ldots , X_n]$ an ideal in its polynomial ring, then with notation 
\[
\mathfrak{V}(I):=\{x\in K^n \, | \, \forall f \in I \, : f(x)=0 \} \\
\]
\[
\mathfrak{I}(W):=\{ f \in k[X_1, \ldots , X_n] \, | \, \forall x \in W \, : f(x) = 0\},\: W \subset K^n
\] 
\[
\sqrt{I} := \{ p \in k[X_1, \ldots , X_n] \, | \, \exists n \in \mathbb{N}: p^n \in I\}
\]
holds 
\[
\mathfrak{I}(\mathfrak{V}(I))=\sqrt{I}
\]
\end{theorem}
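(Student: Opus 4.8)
The plan is to prove the two inclusions of $\mathfrak{I}(\mathfrak{V}(I)) = \sqrt{I}$ separately, reducing the hard direction to the case of an algebraically closed base field. The inclusion $\sqrt{I} \subseteq \mathfrak{I}(\mathfrak{V}(I))$ is immediate: if $p \in \sqrt{I}$ then $p^m \in I$ for some $m$, so $p(x)^m = 0$ for every $x \in \mathfrak{V}(I)$, and since $K$ is a field this forces $p(x) = 0$, i.e.\ $p \in \mathfrak{I}(\mathfrak{V}(I))$. All the content lies in the reverse inclusion.

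For the reverse inclusion I would first reduce to the case $k = K$. Set $J := I \cdot K[X_1, \ldots, X_n]$, the extension of $I$ to the polynomial ring over the algebraically closed field $K$; clearly $\mathfrak{V}(J) = \mathfrak{V}(I)$ as subsets of $K^n$. Once the theorem is known over $K$, any $f \in k[X_1,\ldots,X_n]$ vanishing on $\mathfrak{V}(I)$ satisfies $f^m \in J$ for some $m$. To conclude $f \in \sqrt{I}$ inside $k[X_1,\ldots,X_n]$, I would invoke the descent identity $J \cap k[X_1,\ldots,X_n] = I$, which holds because $K$ is faithfully flat over $k$ (equivalently, by choosing a $k$-basis of $K$ containing $1$ and comparing coefficients, an element of $J$ with coefficients in $k$ is already a $k[X_1,\ldots,X_n]$-combination of generators of $I$).

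It then remains to prove the theorem when the base field is algebraically closed, for which the engine is \emph{Zariski's Lemma}: if a field $L$ is finitely generated as an algebra over a field $K$, then $L$ is a finite algebraic extension of $K$. Granting this, I would deduce the \emph{weak} Nullstellensatz: for a maximal ideal $\mathfrak{m} \triangleleft K[X_1,\ldots,X_n]$ the quotient $L = K[X_1,\ldots,X_n]/\mathfrak{m}$ is a field, finitely generated as a $K$-algebra, hence finite over $K$; as $K$ is algebraically closed, $L = K$, and the images $a_i$ of the $X_i$ yield a point $a \in K^n$ with $\mathfrak{m} = (X_1 - a_1, \ldots, X_n - a_n)$. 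Consequently every proper ideal is contained in such an $\mathfrak{m}$ and therefore has a common zero in $K^n$.

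Finally I would pass from the weak to the strong form by the Rabinowitsch trick. Writing $I = (g_1, \ldots, g_r)$ and taking $f \in \mathfrak{I}(\mathfrak{V}(I))$, introduce a fresh variable $Y$ and consider $\widetilde{J} = (g_1, \ldots, g_r,\, 1 - Y f) \subseteq K[X_1,\ldots,X_n,Y]$. This ideal has no zero in $K^{n+1}$, since any common zero would project to a point of $\mathfrak{V}(I)$ at which simultaneously $f = 0$ and $Yf = 1$. By the weak Nullstellensatz $\widetilde{J} = (1)$, so $1 = \sum_i h_i g_i + h_0 (1 - Yf)$; substituting $Y = 1/f$ and clearing denominators by a suitable power $f^N$ gives $f^N \in I$, whence $f \in \sqrt{I}$. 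The main obstacle is Zariski's Lemma, the one genuinely non-formal ingredient, which I would establish either through Noether normalization or by the Artin--Tate argument; the descent step of the base-change reduction is the secondary technical point, but it is routine once faithful flatness of a field extension is available.
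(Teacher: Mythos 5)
The paper does not actually prove this statement: it is quoted as a classical theorem and delegated to the literature (Cox--Little--O'Shea, Chapter~4, \S 1, Theorem~2), so there is no internal argument to compare yours against. What can be said is that your proposal is correct and reproduces the standard textbook architecture, which is essentially what the cited reference does: the easy inclusion $\sqrt{I} \subseteq \mathfrak{I}(\mathfrak{V}(I))$, the weak Nullstellensatz deduced from Zariski's lemma (maximal ideals of $K[X_1,\ldots,X_n]$ are of the form $(X_1-a_1,\ldots,X_n-a_n)$ when $K$ is algebraically closed), and the Rabinowitsch trick to pass to the strong form. The individual steps check out: $\mathfrak{V}(I \cdot K[X_1,\ldots,X_n]) = \mathfrak{V}(I)$ is clear, and the descent identity $I\cdot K[X_1,\ldots,X_n] \cap k[X_1,\ldots,X_n] = I$ is correct, since a field extension is free, hence faithfully flat, over the base (or, as you say, compare coefficients against a $k$-basis of $K$ containing $1$). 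One point genuinely in your favour: the paper states the theorem in a relative form, with a base field $k$ that need not be algebraically closed and an algebraically closed extension $K/k$ (it is applied with $k=\mathbb{R}$, $K=\mathbb{C}$, and the ideal $I$ lives in $\mathbb{R}[X_1,\ldots,X_n]$), whereas the theorem cited in Cox--Little--O'Shea is stated over an algebraically closed base field. Your base-change and descent step is exactly the bridge between the cited result and the statement as formulated in the paper, so on this point your proposal is more complete than the paper's treatment. The only ingredient you leave as a black box is Zariski's lemma, which you flag explicitly and which is a legitimate external input at this level of granularity; the trivial edge case $f=0$ in the Rabinowitsch substitution $Y = 1/f$ is the only unmentioned detail, and it is immediate since $0 \in \sqrt{I}$.
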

The proof of this theorem can be found in most textbooks on commutative algebra or algebraic geometry, e.g. \cite[Chapter 4, \S 1, Theorem~2, p.~179]{cox_little_oshea2015book}.

In our setting, we take \( k = \mathbb{R} \), \( K = \mathbb{C} \). 

In the ring \( A = \mathbb{R}[X_1, \ldots, X_n]/I \), the nilradical \( \sqrt{0} \) consists of classes \( p + I \) such that \( p^n \in I \) for some \( n \). Thus,
\[
\sqrt{0} = \sqrt{I}/I,
\]
and hence \( A/\sqrt{0} \cong \mathbb{R}[X_1, \ldots, X_n]/\sqrt{I} \) by the third isomorphism theorem.

By the Nullstellensatz, \( \sqrt{I} = \mathfrak{I}(\mathfrak{V}(I)) \). Therefore, two polynomials \( p_1, p_2 \in \mathbb{R}[X_1, \ldots, X_n] \) satisfy
\[
(p_1 - p_2) \in \sqrt{I} \quad \Leftrightarrow \quad p_1(x) = p_2(x) \text{ for all } x \in \mathfrak{V}(I),
\]
which shows that $p_1+\sqrt{I} = p_2+\sqrt{I} \Leftrightarrow p_1(x)=p_2(x), \quad \forall x \in \mathfrak{V}(I)$, which means that in the quotient \( \mathbb{R}[X_1, \ldots, X_n]/\sqrt{I} \), elements differ only by their values in \( \mathfrak{V}(I) \). In other words,
\[
A/\sqrt{0} \cong \mathbb{R}[X_1, \ldots, X_n]/\sqrt{I} \cong \{g : \mathfrak{V}(I) \to \mathbb{C} \mid g \in \mathbb{R}[X_1, \ldots, X_n] \}.
\]

This is exactly the setting of the special case we proved earlier. In particular, this shows that $\dim(A/\sqrt{0}) < \infty$ as a linear subspace. It remains to deduce that $\dim(A) < \infty$, in order to use a basis for the Gram matrix. To that end let us prove the following lemma:

\begin{lemma}
For any ideal $I \triangleleft \mathbb{R}[X_1, \ldots , X_n]$ and $A = \mathbb{R}[X_1, \ldots , X_n]/I$ we have
\[
\dim(A/\sqrt{0})<\infty \Leftrightarrow \dim(A)<\infty
\]
as vector spaces, where 
\begin{align*}
    \sqrt{0}&:=\{a \in A \, | \, \exists n \in \mathbb{N}: a^n = 0\} \\
    &= \{ p+I \, | \, p \in \mathbb{R}[X_1, \ldots , X_n], \exists n \in \mathbb{N}: \, p^n \in I\} = \sqrt{I}/I
\end{align*}
is the nilradical over $A$.
\end{lemma}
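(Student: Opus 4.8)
The plan is to prove the two implications separately. The direction $\dim(A) < \infty \Rightarrow \dim(A/\sqrt{0}) < \infty$ is immediate: since $A/\sqrt{0}$ is the quotient of $A$ by the linear subspace $\sqrt{0}$, we have $\dim(A/\sqrt{0}) \le \dim(A) < \infty$. All of the content lies in the converse, which is the step I expect to be the main obstacle.

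For the direction $\dim(A/\sqrt{0}) < \infty \Rightarrow \dim(A) < \infty$, the key structural input is that $A$ is a \emph{Noetherian} ring, being a quotient of $\mathbb{R}[X_1, \ldots, X_n]$, which is Noetherian by Hilbert's basis theorem. In a Noetherian ring the nilradical is nilpotent: writing $\sqrt{0} = (a_1, \ldots, a_m)$ with $a_j^{n_j} = 0$, a pigeonhole argument on the generators shows $(\sqrt{0})^N = 0$ for $N = \sum_j (n_j - 1) + 1$. This lets me set up the finite descending filtration
\[
A \supseteq \sqrt{0} \supseteq (\sqrt{0})^2 \supseteq \cdots \supseteq (\sqrt{0})^N = 0.
\]
I would then analyze each successive quotient $(\sqrt{0})^k / (\sqrt{0})^{k+1}$. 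Because $\sqrt{0}$ annihilates this quotient, it is naturally a module over $A/\sqrt{0}$. Since $A$ is Noetherian, the ideal $(\sqrt{0})^k$ is a finitely generated $A$-module, so the quotient is a finitely generated $A/\sqrt{0}$-module. As $A/\sqrt{0}$ is by hypothesis a finite-dimensional $\mathbb{R}$-algebra, any finitely generated module over it has finite $\mathbb{R}$-dimension (a finite generating set times an $\mathbb{R}$-basis of $A/\sqrt{0}$ spans it). Hence every graded piece is finite-dimensional, and summing over the filtration gives
\[
\dim_{\mathbb{R}}(A) = \sum_{k=0}^{N-1} \dim_{\mathbb{R}}\bigl((\sqrt{0})^k / (\sqrt{0})^{k+1}\bigr) < \infty,
\]
which completes the argument.

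The crucial lever is the nilpotence of the nilradical, which is exactly where Noetherianity is essential: without it the filtration need not terminate, and one can indeed produce a non-Noetherian ring whose reduction $A/\sqrt{0}$ is finite-dimensional while $A$ itself is infinite-dimensional. Everything else in the converse is routine bookkeeping once the terminating filtration is in place, so I would present the nilpotence step carefully and treat the module finiteness and the dimension count as the straightforward consequences they are.
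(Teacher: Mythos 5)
Your proof is correct, and it rests on the same two pillars as the paper's own argument: Noetherianity of $A$ (hence finite generation of $\sqrt{0}$) and a pigeonhole argument giving nilpotence of the nilradical. Where you genuinely diverge is in how finite-dimensionality is then extracted. The paper stays entirely inside linear algebra: it fixes a linear complement $U$ with $A = \sqrt{0} \oplus U$ and $U \cong A/\sqrt{0}$, and then shows by an iterated decomposition --- repeatedly splitting each coefficient $h_j = \widetilde{h_j} + h_j'$ into its $U$-part and its $\sqrt{0}$-part --- that every $h \in \sqrt{0}$ lies in the span of the finitely many products $u\, g^{\alpha}$ with $u$ ranging over a basis of $U$ and $|\alpha| < M$, which exhibits an explicit finite spanning set of $\sqrt{0}$. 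You instead run the standard structural argument: filter $A$ by the powers $(\sqrt{0})^k$, note that nilpotence makes the filtration terminate, observe that each graded piece $(\sqrt{0})^k/(\sqrt{0})^{k+1}$ is a finitely generated module over the finite-dimensional algebra $A/\sqrt{0}$ (finitely generated because $A$ is Noetherian, and an $A/\sqrt{0}$-module because $\sqrt{0}$ annihilates it), and sum dimensions along the filtration. Your route is shorter and cleaner once one is willing to quote that a finitely generated module over a finite-dimensional $\mathbb{R}$-algebra has finite $\mathbb{R}$-dimension; the paper's route is more elementary in that it never invokes module-theoretic language and produces the spanning set by hand --- in effect it unrolls your filtration one step at a time. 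A further small difference: you dispatch the easy direction $\dim(A) < \infty \Rightarrow \dim(A/\sqrt{0}) < \infty$ directly as a quotient of vector spaces, whereas the paper folds both directions into the direct-sum decomposition $A = \sqrt{0} \oplus U$; both are fine. Your closing remark that Noetherianity is essential is a good sanity check, though in the setting of the lemma it is automatic, since $A$ is by hypothesis a quotient of $\mathbb{R}[X_1,\ldots,X_n]$.
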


\begin{proof}
As $\sqrt{0}$ is a linear subspace of $A$, let us consider the linear decomposition of $A$. Let $U$ be the linear complement of $\sqrt{0}$, i.e.~$A = \sqrt{0} \oplus U$. 

Note that $U \cong A/\sqrt{0}$ as linear subspaces, under the residue isomorphism. It follows that $\dim(A) < \infty$ if and only if both $\dim(A/\sqrt{0}) < \infty$ and $\dim(\sqrt{0}) < \infty$. Thus, it suffices to prove that $\dim(\sqrt{0}) < \infty$ under the assumption $\dim(A/\sqrt{0}) < \infty$.

Since $A$ is a finitely generated algebra over a field, it is Noetherian. Therefore, the ideal $\sqrt{0} \triangleleft A$ is finitely generated as an ideal.

Let $\sqrt{0} = \langle g_1, \ldots, g_k\rangle$. Let $N_j \in \mathbb{N}$ be the smallest natural numbers such that $g_j^{N_j} = 0$ for all $j \in \{1, \ldots, k\}$, and let $N = \max\{N_1, \ldots, N_k\}$. Then for all $h \in \sqrt{0}$ we have $h = \sum_{j=1}^k h_j g_j$ with $h_j \in A$, and
\[
h^M = \sum_{|\alpha|=M} \widehat{h_\alpha} g^\alpha, \quad \forall M \in \mathbb{N},
\]
where $\alpha$ is a non-negative multiindex. This means however, that for $M \geq k(N-1)+1$ we have $h^M = 0$ by the pigeonhole principle.

Now for every $h \in \sqrt{0}$ with $h = \sum_{j=1}^k h_j g_j$, $h_j \in A$, we can decompose
\[
\forall j \in \{1, \ldots, k\}: \quad h_j \in A \;\Rightarrow\; h_j = \widetilde{h_j} + h_j', \quad
\widetilde{h_j} \in U, \quad h_j' \in \sqrt{0}.
\]
We can now take each of the $h_j'$ as a new $h$ and again decompose it over our ideal. Continuing this process, after at most $M$ decompositions we reach products of $g_j$ of total degree $M$, i.e.~a point where our original $h$ is decomposed as 
\[
h = \sum_{|\alpha| < M} \widehat{h_\alpha} g^{\alpha}, \quad \widehat{h_\alpha} \in U,
\]
since all terms in $\sqrt{0}$ will satisfy $|\alpha| \geq M$. This sum has finitely many terms. We are working under the assumption that $A/\sqrt{0}$, and hence $U$, is finite-dimensional. Therefore, decomposing the $\widehat{h_\alpha}$ in $U$ yields a finite linear combination, and thus $\dim(\sqrt{0}) < \infty$.
\end{proof}

We now return to the proof of the multivariate Hermite criterion. Since $\linebreak\dim(A/\sqrt{0}) < \infty$, the lemma above implies that $\dim(A) < \infty$. We can now construct a basis $B$ of $\sqrt{0}$ and extend it to a basis $E$ of $A$. 

We construct the Gram matrix of $H : A \times A \to \mathbb{R}, \quad (f,g) \mapsto \operatorname{tr}(\varphi_{fg})$. Any basis element $g \in \sqrt{0}$ is nilpotent. Therefore, for any $f \in A$ the product $fg$ is also nilpotent, which means $\operatorname{tr}(\varphi_{fg}) = 0$, as all its eigenvalues are 0. Hence, all matrix entries involving those basis elements will be zero. The matrix therefore takes the block form:

\[
\left(
\begin{array}{c|c}
\begin{matrix}
\Phi_B (H)
\end{matrix} & \begin{matrix}
0
\end{matrix} \\ \hline
\begin{matrix}
0
\end{matrix} & \begin{matrix}
0
\end{matrix}
\end{array}
\right).
\]

Here, \( \Phi_B(H) \) is the Gram matrix associated to the basis of $U$, which is the linear complement of $\sqrt{0}$ and is $U \cong A/\sqrt{0}$. This shows that we may restrict our attention to $A/\sqrt{0} \cong \{g : \mathfrak{V}(I) \to \mathbb{C} \mid g \in \mathbb{R}[X_1, \ldots, X_n] \}$.

Hence the general multivariate Hermite criterion follows.

\section{Implementation}
Let us now apply the multivariate Hermite criterion to determine the number of real and complex roots of a system of polynomial equations. This requires knowledge of Gröbner bases and the ability to perform reduction modulo an ideal \( I \). For a brief overview or refresher, see \cite[Chapter~2]{cox_little_oshea2015book}.

Recall that we have a family of polynomials \(f_1, \ldots, f_k \in \mathbb{R}[X_1, \ldots, X_n]\) with a finite (zero-dimensional) set of zeros, the generated ideal \(I = \langle f_1, \ldots, f_k \rangle\), and the quotient ring \(A = \mathbb{R}[X_1, \ldots, X_n]/I\). The goal is to determine the number of distinct real and complex roots. To that end, we define the linear maps \(\varphi_f : A \to A, \, h \mapsto fh\), and the bilinear form \(H : A \times A \to \mathbb{R}, \, (f, g) \mapsto \operatorname{tr}(\varphi_{fg})\). Then, the number of distinct complex roots is \(\operatorname{Rank}(H)\), and the number of distinct real roots is \(\operatorname{Tr}(H)\).
 
We proceed according to the algorithm outlined below. 

To compute \( \operatorname{tr}(\varphi_{fg}) \), we need to represent the multiplication operator \( \varphi_{fg} \) as a matrix \( M_B^B(\varphi_{fg}) \) with respect to some linear basis \( B \) of the quotient ring \( A \).

The most straightforward and reliable way to construct such a basis is to generate all monomials up to the maximum degree appearing in the generators \( f_1, \ldots, f_k \). These can be formed by expanding \( (1 + x_1 + \cdots + x_n)^d \), where \( d \) is the maximum total degree among the \( f_i \). We then reduce this list of monomials modulo the ideal \( I \), using the Gröbner basis of \( I \). Decomposing all remainders into monomials and eliminating duplicates produces a basis of \( A \).

Once we have the basis, we compute the products \( fg \) for each pair \( f, g \in B \), reduce these products modulo \( I \), and express the results in terms of the basis. These expressions yield the matrix representations \( M_B^B(\varphi_{fg}) \). Computing the trace of each such matrix gives us the entries of the Hermite matrix \( H \).

Here is a Macaulay2 implementation of this algorithm:
\begin{lstlisting}[language=Macaulay2]
kk=QQ;

-- Function to normalize monomials for later
normalizeMonomial = m -> (
	if leadCoefficient(m)==0 then m
	else m/leadCoefficient(m)
);

-- signature function for the end of the code
signature = M -> (
    evs = eigenvalues(M, Hermitian=>true);
    p   = #select(evs, lambda -> lambda > 0);
    q   = #select(evs, lambda -> lambda < 0);
    p - q
);

myHermite = args -> (
	
	-- Creation of the polynomial ring and the ideal
	n = args_0;
	var = apply(toList(1..n), i -> concatenate("x", toString(i)));
	R = QQ[var];
	var = flatten entries vars R;

	pols = args_{1..#args-1};
	pols = apply(pols, s -> value s);
	I = ideal(pols);

	-- Check if the ideal is zero dimensional
	if dim I > 0 then error "The ideal is not zero-dimensional";
	
	-- Creation of the Linear Basis of A = R/I
	
	MaxMonDeg = degree max apply(pols, p -> normalizeMonomial(p));
	F = sum var;
	MonList = flatten entries monomials((1+F)^(MaxMonDeg_0), Variables=> var);
	ReducedMonList = apply(MonList, p -> p%I);
	LinearBasis = flatten apply(ReducedMonList, p -> terms p);
	LinearBasis = apply(LinearBasis, p -> normalizeMonomial(p));
	LinearBasis = sort(toList set LinearBasis);
	
	-- Application of the Hermite Criterion
	n = length(LinearBasis);
	H = mutableMatrix table(n, n, (i,j) -> 0_R);

	M = mutableMatrix table(n, n, (k,l) -> 0_R);

	variables = flatten entries vars R;

	for i in 0..n-1 do (
		for j in 0..n-1 do (
			f = LinearBasis_i;
			g = LinearBasis_j;
			for k in 0..n-1 do (
				(SkipMat, Coeff) = coefficients(((f*g*LinearBasis_k)%I), Variables => variables, Monomials => LinearBasis);
				for l in 0..n-1 do (
					M_(k,l) = Coeff_0_l;
				);
			);
			N = matrix M; 
			H_(i,j)= trace N;
		);
	);
	print("Hermite matrix:");
	print(H);
	
	print("number of complex solutions:");
	print(rank H);
	
	print("number of real solutions:");
	immutH = matrix H;
	HRR = substitute(immutH, RR);
	print(signature(HRR));
);

\end{lstlisting}

To apply this code to a particular system, one simply runs a command of the following form:
\begin{lstlisting}[language = Macaulay2]
myHermite(2, "x1*x2+x2-1", "x1^2+x2^2-1")
\end{lstlisting}

\noindent Output:
\begin{lstlisting}
Hermite matrix:
| 4  0 -2 0  |
| 0  0 4  6  |
| -2 4 4  0  |
| 0  6 0  -4 |
number of complex solutions:
4
number of real solutions:
2
\end{lstlisting}

This implementation is compatible with any number of variables (which has to be specified in the first variable of the call and have to be called \( x1, x2, \ldots \) ).

For example, here is the case where the Hermite matrix is not of full rank:
\begin{lstlisting}[language = Macaulay2]
myHermite(2, "x1-1", "x1^2+x2^2-1")
\end{lstlisting}
\begin{lstlisting}[language = Macaulay2]
Hermite matrix:
| 2 0 |
| 0 0 |
number of complex solutions:
1
number of real solutions:
1
\end{lstlisting}

Alternatively, the Macaulay2 package \textbf{RealRoots} provides the functions \texttt{traceCount} and \texttt{realCount}, which compute the number of distinct complex and real solutions, respectively. A detailed description of this package can be found in \cite{garcia_maluccio_sottile_yahl2024article}.

\vspace{1em}

\subsection{Speed}~

The most computationally intensive part of this algorithm is the calculation of the Gröbner basis of $I$ for further reductions modulo $I$. Fortunately, Macaulay2 is optimized for such operations. The complexity of this algorithm depends on the highest power $d$ present in $f_i$ and the number of variables $n$, and is at worst $O(d^{2^n})$. This means the runtime grows polynomially in $d$, and exponentially in $n$:

\begin{lstlisting}[language = Macaulay2]
timing myHermite(2, "x1-x2", "x1^2-x2") -- .0106531 seconds
timing myHermite(2, "x1-x2", "x1^3-x2") -- .0308254 seconds
timing myHermite(2, "x1-x2", "x1^4-x2") -- .0417585 seconds
timing myHermite(2, "x1-x2", "x1^5-x2") -- .0844115 seconds
timing myHermite(2, "x1-x2", "x1^6-x2") -- .127797 seconds
timing myHermite(2, "x1-x2", "x1^7-x2") -- .228504 seconds
\end{lstlisting}

\begin{lstlisting}[language = Macaulay2]
timing myHermite(2, "x1-1", "x1^2+x2^2-1") -- .0277464 seconds
timing myHermite(3, "x1-1", "x1^2+x2^2-1", 
				"x1^2+x2^2+x3^2-1") -- .107985 seconds
timing myHermite(4, "x1-1", "x1^2+x2^2-1", 
				"x1^2+x2^2+x3^2-1", 
				"x1^2+x2^2+x3^2+x4^2-1") -- .209921 seconds
timing myHermite(5, "x1-1", "x1^2+x2^2-1", 
				"x1^2+x2^2+x3^2-1", 
				"x1^2+x2^2+x3^2+x4^2-1", 
				"x1^2+x2^2+x3^2+x4^2+x5^2-1") -- 1.1736 seconds
timing myHermite(6, "x1-1", "x1^2+x2^2-1", 
				"x1^2+x2^2+x3^2-1", 
				"x1^2+x2^2+x3^2+x4^2-1", 
				"x1^2+x2^2+x3^2+x4^2+x5^2-1", 
				"x1^2+x2^2+x3^2+x4^2+x5^2+x6^2-1") 
                -- 4.30042 seconds
timing myHermite(7, "x1-1", "x1^2+x2^2-1", 
				"x1^2+x2^2+x3^2-1", 
				"x1^2+x2^2+x3^2+x4^2-1", 
				"x1^2+x2^2+x3^2+x4^2+x5^2-1", 
				"x1^2+x2^2+x3^2+x4^2+x5^2+x6^2-1", 
				"x1^2+x2^2+x3^2+x4^2+x5^2+x6^2+x7^2-1") 
                -- 14.5534 seconds
\end{lstlisting}

\bibliographystyle{plain} 
\bibliography{references} 

@unpublished{plaumann2023seminar,
  author = {Daniel Plaumann},
  title = {Real Algebraic Geometry},
  note = {Tutorial lectures, Institut Henri Poincaré},
  year = {2023},
  url = {https://www.mathematik.tu-dortmund.de/~dplauman/notes/rag-ihp.pdf}
}

@book{basu_pollack_roy2003book,
  author = {Saugata Basu and Richard Pollack and Marie-Françoise Roy},
  title = {Algorithms in Real AlgebraicGeometry}, 
  publisher = {Springer},    
  year = {2003}  
}

@book{cox_little_oshea2015book,
  author = {David A. Cox and John Little and Donal O'Shea},
  title = {Ideals, Varieties, and Algorithms}, 
  publisher = {Springer Nature},    
  year = {2015}  
}

@article{garcia_maluccio_sottile_yahl2024article,
    author = {Jordy Lopez Garcia and Kelly Maluccio and Frank Sottile and Thomas Yahl},
    title = {Real solutions to systems of polynomial equations in Macaulay2},
    journal = {MSP},
    year = {2024}
}

\end{document}